\numberwithin{equation}{section} \setlength{\textwidth}{16cm}
\newtheorem{theorem}{Theorem}[section]
\newtheorem{lemma}[theorem]{Lemma}
\theoremstyle{definition}
\newtheorem{definition}[theorem]{Definition}
\theoremstyle{remark}
\newtheorem{remark}[theorem]{Remark}
\numberwithin{equation}{section}
\begin{document}
\title[Meromorphic bi-univalent functions]{Initial coefficient bounds for
certain classes of Meromorphic bi-univalent functions}
\author{H. Orhan, N. Magesh and V.K.Balaji}
\address{Department of Mathematics, \\
Faculty of Science, Ataturk University, \\
25240 Erzurum, Turkey.\\
\texttt{e-mail:} $orhanhalit607@gmail.com$}
\address{ Post-Graduate and Research Department of Mathematics,\\
Government Arts College for Men,\\
Krishnagiri 635001, Tamilnadu, India\\
\texttt{e-mail:} $nmagi\_2000@yahoo.co.in$}
\address{Department of Mathematics, L.N. Govt College, \\
Ponneri, Chennai, Tamilnadu, India.\\
\texttt{e-mail:} $balajilsp@yahoo.co.in$}
\maketitle

\begin{abstract}
In this paper we extend the concept of bi-univalent to the class of
meromorphic functions. We propose to investigate the coefficient estimates
for two classes of meromorphic bi-univalent functions. Also, we find
estimates on the coefficients $|b_0|$ and $|b_1|$ for functions in these new
classes. Some interesting remarks and applications of the results presented
here are also discussed.\ \newline
2010 Mathematics Subject Classification: 30C45. \ \newline
\textit{Keywords and Phrases}: Analytic functions, univalent functions,
bi-univalent functions, meromorphic functions, meromorphic bi-univalent
functions.
\end{abstract}


\section{Introduction}

Let $\mathcal{A}$ denote the class of functions of the form 
\begin{equation}  \label{Int-e1}
h(z)=z+\sum\limits_{n=2}^{\infty}a_nz^n
\end{equation}
which are analytic in the open unit disc $\mathbb{U}=\{z: z \in \mathbb{C}%
\,\, \mathrm{and}\,\, |z|<1 \}.$ Further, by $\mathcal{S}$ we shall denote
the class of all functions in $\mathcal{A}$ which are univalent in $\mathbb{U%
}.$ 

It is well known that every function $h\in \mathcal{S}$ has an inverse $%
h^{-1},$ defined by 
\begin{equation*}
h^{-1}(h(z))=z,\,\, (z \in \mathbb{U})
\end{equation*}
and 
\begin{equation*}
h(h^{-1}(w))=w, \,\, (|w| < r_0(h);\,\, r_0(h) \geq \frac{1}{4}),
\end{equation*}
where 
\begin{equation}  \label{Int-f-inver}
h^{-1}(w) = w - a_2w^2 + (2a_2^2-a_3)w^3 - (5a_2^3-5a_2a_3+a_4)w^4+\ldots .
\end{equation}

A function $h \in \mathcal{S}$ is said to be bi-univalent in $\mathbb{U}$ if
both $h(z)$ and $h^{-1}(z)$ are univalent in $\mathbb{U}.$ Let $\Sigma_{%
\mathcal{B}}$ denote the class of bi-univalent functions in $\mathbb{U}$
given by (\ref{Int-e1}).

In 1967, Lewin \cite{Lewin} investigated the bi-univalent function class $%
\Sigma$ and showed that $|a_2|<1.51.$ On the other hand, Brannan and Clunie 
\cite{Bran-1979} (see also \cite{Branna1970,Bran1985,Taha1981}) and
Netanyahu \cite{Netany} made an attempt to introduce various subclasses of
the bi-univalent function class $\Sigma_{\mathcal{B}}$ and obtained
non-sharp coefficient estimates on the first two coefficients $|a_2|$ and $%
|a_3|$ of (\ref{Int-e1}). But the coefficient problem for each of the
following Taylor-Maclaurin coefficients $|a_n|\, (n\in\mathbb{N}%
\setminus\{1,2\};\;\;\mathbb{N}:=\{1,2,3,\cdots\})$ is still an open
problem. Following Brannan and Taha \cite{Bran1985}, many researchers (see%
\cite%
{Ali-Ravi-Ma-Mina-class,Caglar-Orhan,BAF-MKA,Goyal-Goswami,haya,HO-NM-VKB,SSS-VR-VR,HMS-AKM-PG,Xu-HMS-AML,Xu-HMS-AMC}%
) have recently introduced and investigated several interesting subclasses
of the bi-univalent function class $\Sigma_{\mathcal{B}}$ and they have
found non-sharp estimates on the first two Taylor-Maclaurin coefficients $%
|a_2|$ and $|a_3|.$ 

Let $\Sigma$ denote the class of functions $f$ of the form 
\begin{equation}  \label{Int-mero-e1}
f(z)=z+\sum\limits_{n=0}^{\infty}\frac{b_n}{z^n},
\end{equation}
which are mermorphic univalent functions defined in 
\begin{equation*}
\mathcal{V}:=\{z: z\in \mathbb{C} \,\, \mathrm{and}\,\, 1<|z|<\infty\} .
\end{equation*}
It is well known that every function $f\in\Sigma$ has an inverse $f^{-1},$
defined by 
\begin{equation*}
f^{-1}(f(z))=z \qquad(z\in \mathcal{V})
\end{equation*}
and 
\begin{equation*}
f^{-1}(f(w))=w \qquad(M<|w|<\infty,\, M >0).
\end{equation*}
Furthermore, the inverse function $f^{-1}$ has a series expansion of the
form 
\begin{equation}  \label{Int-mero-f-inverse-e1}
f^{-1}(w)= w+\sum\limits_{n=0}^{\infty}\frac{B_n}{w^n},
\end{equation}
where $M<|w|<\infty.$ 

The coefficient problem was investigated for various interesting subclasses
of the meromorphic univalent functions (see, for example \cite%
{PLD-1971,Krzyz-1979,Shiffer-38}). In 1951, Springer \cite{Springer}
conjectured on the coefficient of the inverse of meromorphic univalent
functions, latter the problem was investigated by many researchers for
various subclasses (see, for details \cite%
{PK-HMS-AKM,Krzyz-1979,Kubota,Schober,Xu-Lv-AMC-2013}). 

Analogous to the bi-univalent analytic functions, a function $f\in \Sigma$
is said to be meromorphic bi-univalent if both $f$ and $f^{-1}$ are
meromorphic univalent in $\mathcal{V}.$ We denote by $\Sigma_{\mathcal{M}}$
the class of all meromorphic bi-univalent functions in $\mathcal{V}$ given
by (\ref{Int-mero-e1}). 

A function $f$ in the class $\Sigma$ is said to be meromorphic bi-univalent
starlike of order $\alpha (0\leq \alpha <1)$ if it satisfies the following
inequalities 
\begin{equation*}
f \in \Sigma_{\mathcal{M}}, \,\, \Re \left ( \frac{zf^{\prime }(z)}{f(z)}
\right ) > \alpha\,\,(z\in \mathcal{V}) \,\, \mathrm{and }\,\, \Re \left ( 
\frac{wg^{\prime }(w)}{g(w)} \right )>\alpha\,\,(w\in \mathcal{V}),
\end{equation*}
where $g(w)=f^{-1}(w)$ is the inverse of $f(z)$ whose series expansion is
given by (\ref{Int-mero-f-inverse-e1}), a simple calculation shows that 
\begin{equation}  \label{g-e-mero}
g(w)=w-b_0-\frac{b_1}{w}-\frac{b_2+b_0b_1}{w^2}-\frac{%
b_3+2b_0b_2+b_0^2b_1+b_1^2}{w^3}+\dots .
\end{equation}

We denote by ${\Sigma}^*_{\mathcal{M}}(\alpha)$ the class of all meromorphic
bi-univalent starlike functions of order $\alpha .$ Similarly, a function $f$
in the class $\Sigma$ is said to be meromorphic bi-univalent strongly
starlike of order $\alpha(0<\alpha\leq 1)$ if it satisfies the following
conditions 
\begin{equation*}
f \in \Sigma_{\mathcal{M}}, \,\, \left |\arg \left ( \frac{zf^{\prime }(z)}{%
f(z)} \right )\right | < \frac{\alpha \pi}{2}\,\, (z\in \mathcal{V})\, 
\mathrm{and}\,\, \left |\arg \left ( \frac{wg^{\prime }(w)}{g(w)} \right
)\right | < \frac{\alpha \pi}{2} \,\, (w\in \mathcal{V}),
\end{equation*}
where $g(w)$ is given by (\ref{g-e-mero}). We denote by $\widetilde{\Sigma}_{%
\mathcal{M}}^*(\alpha)$ the class of all meromorphic bi-univalent strongly
starlike functions of order $\alpha .$ The classes $\Sigma_{\mathcal{M}%
}^*(\alpha)$ and $\widetilde{\Sigma}_{\mathcal{M}}^*(\alpha)$ were
introduced and studied by Halim et al. \cite{Halim-Mero-Bi}. 

Motivated by the works of Halim et al. \cite{Halim-Mero-Bi} we define the
following general subclasses $\Sigma^*_{\mathcal{M}}(\alpha, \mu, \lambda)$
and $\widetilde{\Sigma}^*_{\mathcal{M}}(\alpha, \mu, \lambda)$ of the
function class $\Sigma .$

\begin{definition}
\label{Defi-2} A function $f$ given by (\ref{Int-mero-e1}) is said to be in
the class $\Sigma _{\mathcal{M}}^{\ast }(\alpha ,\mu ,\lambda )$ if the
following conditions are satisfied: 
\begin{equation}
f\in \Sigma _{\mathcal{M}},\,\,\Re \left( (1-\lambda )\left( \frac{f(z)}{z}%
\right) ^{\mu }+\lambda f^{\prime }(z)\left( \frac{f(z)}{z}\right) ^{\mu
-1}\right) >\alpha \,\,(\mu \geq 0,\,\lambda \geq 1,\text{ }\lambda >\mu
;z\in \mathcal{V})  \label{Defi-2-e1}
\end{equation}%
and 
\begin{equation}
\Re \left( (1-\lambda )\left( \frac{g(w)}{w}\right) ^{\mu }+\lambda
g^{\prime }(w)\left( \frac{g(w)}{w}\right) ^{\mu -1}\right) >\alpha \,\,(\mu
\geq 0,\,\lambda \geq 1,\text{ }\lambda >\mu ;w\in \mathcal{V})
\label{Defi-2-e2}
\end{equation}%
for some $\alpha (0\leq \alpha <1),$ where $g$ is given by (\ref{g-e-mero}).
\end{definition}

\begin{definition}
\label{Defi-1-e1} A function $f$ given by (\ref{Int-mero-e1}) is said to be
in the class $\widetilde{\Sigma }_{\mathcal{M}}^{\ast }(\alpha ,\mu ,\lambda
)$ if the following conditions are satisfied: 
\begin{equation}
f\in \Sigma _{\mathcal{M}},\,\,\left\vert \arg \left( (1-\lambda )\left( 
\frac{f(z)}{z}\right) ^{\mu }+\lambda f^{\prime }(z)\left( \frac{f(z)}{z}%
\right) ^{\mu -1}\right) \right\vert <\frac{\alpha \pi }{2}\,\,(\mu \geq
0,\,\lambda \geq 1,\text{ }\lambda >\mu ;z\in \mathcal{V})\,
\label{Defi-1-e1}
\end{equation}%
and 
\begin{equation}
\left\vert \arg \left( (1-\lambda )\left( \frac{g(w)}{w}\right) ^{\mu
}+\lambda g^{\prime }(w)\left( \frac{g(w)}{w}\right) ^{\mu -1}\right)
\right\vert <\frac{\alpha \pi }{2}\,\,(\mu \geq 0,\,\lambda \geq 1,\text{ }%
\lambda >\mu ;w\in \mathcal{V})  \label{Defi-1-e2}
\end{equation}%
for some $\alpha (0<\alpha \leq 1),$ where $g$ is given by (\ref{g-e-mero}).
\end{definition}

It is interesting to note that, for $\lambda =1$ and $\mu =0$ the classes $%
\Sigma _{\mathcal{M}}^{\ast }(\alpha ,\mu ,\lambda )$ and $\widetilde{\Sigma 
}_{\mathcal{M}}^{\ast }(\alpha ,\mu ,\lambda )$ respectively, reduces to the
classes $\Sigma _{\mathcal{M}}^{\ast }(\alpha )$ and $\widetilde{\Sigma }_{%
\mathcal{M}}^{\ast }(\alpha )$ introduced and studied by Halim et al. \cite%
{Halim-Mero-Bi}.

The object of the present paper is to extend the concept of bi-univalent to
the class of meromorphic functions defined on $\mathcal{V}$ and find
estimates on the coefficients $|b_0|$ and $|b_1|$ for functions in the
above-defined classes $\Sigma^*_{\mathcal{M}}(\alpha, \mu, \lambda)$ and $%
\widetilde{\Sigma}^*_{\mathcal{M}}(\alpha, \mu, \lambda)$ of the function
class $\Sigma_{\mathcal{M}}$ by employing the techniques used earlier by
Halim et al. \cite{Halim-Mero-Bi}.

In order to derive our main results, we shall need the following lemma.

\begin{lemma}
\textrm{(see \cite{Pom})}\label{lem-pom} If $\varphi\in \mathcal{P},$ then $%
|c_k|\leqq 2$ for each $k,$ where $\mathcal{P}$ is the family of all
functions $\varphi ,$ analytic in $\mathbb{U},$ for which 
\begin{equation*}
\Re\{\varphi(z)\}>0\qquad (z \in \mathbb{U}),
\end{equation*}
where 
\begin{equation*}
\varphi(z)=1+c_1z+c_2z^2+\cdots \qquad (z \in \mathbb{U}).
\end{equation*}
\end{lemma}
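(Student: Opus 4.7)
The plan is to appeal to the Riesz--Herglotz integral representation of the Carathéodory class $\mathcal{P}$, which is the standard tool for extracting coefficient bounds in this setting. Concretely, I would first recall that every $\varphi \in \mathcal{P}$ admits the representation
$$\varphi(z) = \int_{|\zeta|=1} \frac{\zeta+z}{\zeta-z}\, d\mu(\zeta) \qquad (z \in \mathbb{U}),$$
for some probability Borel measure $\mu$ on the unit circle. This is obtained by applying Poisson's formula to the positive harmonic function $\Re\varphi$ on each disk $|z| \leq r < 1$, using the normalization $\varphi(0) = 1$ to make the representing measure a probability measure, and then passing to a weak-$*$ limit as $r \to 1^{-}$ via compactness of the space of probability measures on the circle.

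Next I would expand the Herglotz kernel as a geometric series,
$$\frac{\zeta+z}{\zeta-z} = 1 + 2 \sum_{k=1}^{\infty} \bar\zeta^{\,k}\, z^{k},$$
which converges uniformly in $\zeta$ on the unit circle for $z$ in any compact subset of $\mathbb{U}$. Substituting into the integral representation, interchanging summation and integration (justified by uniform convergence for $|z| \leq r < 1$), and matching against the Taylor expansion $\varphi(z) = 1 + \sum_{k \geq 1} c_{k} z^{k}$, yields the moment identity
$$c_{k} = 2 \int_{|\zeta|=1} \bar\zeta^{\,k}\, d\mu(\zeta) \qquad (k \geq 1).$$

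The bound $|c_{k}| \leq 2$ is then immediate from the triangle inequality, since $|\bar\zeta^{\,k}| = 1$ on the unit circle and $\mu(\partial \mathbb{U}) = 1$. The only real obstacle is the rigorous establishment of the Riesz--Herglotz representation itself; once this is granted, the coefficient estimate collapses to a one-line computation. If one wished to bypass measure-theoretic machinery, an alternative route is to introduce the Schwarz function $\omega(z) = (\varphi(z)-1)/(\varphi(z)+1)$, which maps $\mathbb{U}$ to itself with $\omega(0) = 0$, and then to recover the $c_{k}$ from $\varphi = (1+\omega)/(1-\omega)$ together with the Schwarz--Pick estimates on the Taylor coefficients of $\omega$; this works but becomes combinatorially heavier for $k \geq 2$, so I would prefer the integral approach.
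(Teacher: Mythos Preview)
Your argument is correct: the Riesz--Herglotz representation followed by the moment identity $c_k = 2\int \bar\zeta^{\,k}\,d\mu$ and the trivial bound $|\bar\zeta^{\,k}|=1$ is the standard and cleanest route to $|c_k|\leq 2$. There is nothing to repair.

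However, note that the paper does not prove this lemma at all; it is quoted as a known result from Pommerenke's monograph \cite{Pom} and used as a black box in the proofs of Theorems~\ref{Bi-th2} and~\ref{Bi-th1}. So there is no ``paper's own proof'' to compare against. Your Herglotz argument is in fact essentially the proof one finds in Pommerenke (and in most texts on univalent functions), so you have supplied the omitted justification rather than diverged from it. Your remark about the alternative Schwarz-function route $\omega=(\varphi-1)/(\varphi+1)$ is also accurate: it gives $|c_1|\leq 2$ immediately but requires more bookkeeping for higher $k$, which is why the integral representation is preferred.
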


\section{Coefficient Bounds for the Function Classes $\Sigma^*_{\mathcal{M}}(%
\protect\alpha, \protect\mu, \protect\lambda)$ and $\widetilde{\Sigma}^*_{%
\mathcal{M}}(\protect\alpha, \protect\mu, \protect\lambda)$}

\ 

We begin this section by finding the estimates on the coefficients $|b_0|$
and $|b_1|$ for functions in the class $\Sigma^*_{\mathcal{M}}(\alpha, \mu,
\lambda).$

\begin{theorem}
\label{Bi-th2} Let the function $f(z)$ given by $(\ref{Int-mero-e1})$ be in
the following class$:$ 
\begin{equation*}
\Sigma _{\mathcal{M}}^{\ast }(\alpha ,\mu ,\lambda )\qquad (0\leq \alpha
<1;\;\lambda \geq 1;\;\;\mu \geq 0;\text{ \ }\lambda >\mu ).
\end{equation*}%
Then 
\begin{equation}
|b_{0}|\leq \frac{2(1-\alpha )}{\lambda -\mu }  \label{bi-th2-b-a2}
\end{equation}%
and 
\begin{equation}
|b_{1}|\leq 2(1-\alpha )\sqrt{\frac{(1-\mu )^{2}(1-\alpha )^{2}}{(\lambda
-\mu )^{4}}+\frac{1}{(2\lambda -\mu )^{2}}}~.  \label{bi-th2-b-a3}
\end{equation}
\end{theorem}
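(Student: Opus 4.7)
The plan is to apply the standard Brannan--Taha / Halim et al. coefficient-comparison scheme, adapted to the meromorphic setting where series are in $1/z$ and $1/w$. Because $f\in \Sigma^*_{\mathcal{M}}(\alpha,\mu,\lambda)$ has a real part exceeding $\alpha$, I will introduce a Carathéodory-type function on $\mathcal{V}$ by writing
\[
(1-\lambda)\left(\frac{f(z)}{z}\right)^{\mu}+\lambda f'(z)\left(\frac{f(z)}{z}\right)^{\mu-1}=\alpha+(1-\alpha)p(z),
\]
where $p(z)=1+c_1/z+c_2/z^2+\cdots$ arises from a function $\varphi(\zeta)=p(1/\zeta)\in\mathcal{P}$ on $\mathbb{U}$; and similarly define $q(w)=1+d_1/w+d_2/w^2+\cdots$ by applying the hypothesis to $g=f^{-1}$ whose expansion is given in \eqref{g-e-mero}. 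By Lemma~\ref{lem-pom}, $|c_k|,|d_k|\le 2$ for each $k$.

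Next I would compute the first two nontrivial Laurent coefficients of the left-hand sides. Writing $f(z)/z=1+u$ with $u=b_0/z+b_1/z^2+\cdots$, the binomial series for $(1+u)^{\mu}$ and $(1+u)^{\mu-1}$ together with $f'(z)=1-b_1/z^2-2b_2/z^3-\cdots$ yield, after combining the $(1-\lambda)$- and $\lambda$-pieces,
\[
\text{coeff of }1/z=-(\lambda-\mu)\,b_0,\qquad \text{coeff of }1/z^2=-(2\lambda-\mu)\!\left[b_1+\tfrac{\mu-1}{2}b_0^{\,2}\right].
\]
The analogous expansion for $g$, using \eqref{g-e-mero}, gives $+(\lambda-\mu)b_0$ and $+(2\lambda-\mu)[b_1-\tfrac{\mu-1}{2}b_0^{\,2}]$. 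Matching these with $(1-\alpha)(c_k)$ and $(1-\alpha)(d_k)$ respectively produces four scalar equations.

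The first pair immediately gives $c_1=-d_1$ together with $b_0=-(1-\alpha)c_1/(\lambda-\mu)$, whence $|b_0|\le 2(1-\alpha)/(\lambda-\mu)$ by Lemma~\ref{lem-pom}, proving \eqref{bi-th2-b-a2}. For \eqref{bi-th2-b-a3}, I will use the $1/z^2$ equation from $f$ to express
\[
b_1=-\frac{(1-\alpha)c_2}{2\lambda-\mu}+\frac{1-\mu}{2}\,b_0^{\,2}
=-\frac{(1-\alpha)c_2}{2\lambda-\mu}+\frac{(1-\mu)(1-\alpha)^2c_1^{\,2}}{2(\lambda-\mu)^2},
\]
and then bound $|b_1|$ by combining the two contributions using $|c_1|,|c_2|\le 2$ to produce the square-root expression on the right-hand side of \eqref{bi-th2-b-a3}.

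The main obstacle is the last step: deriving precisely the square-root form $2(1-\alpha)\sqrt{A^2+B^2}$ (with $A=(1-\mu)(1-\alpha)/(\lambda-\mu)^2$ and $B=1/(2\lambda-\mu)$) rather than the immediate triangle-inequality bound $2(1-\alpha)(A+B)$; this requires treating the $c_2$-contribution and the $b_0^{\,2}$-contribution as orthogonal components and combining their individual modulus bounds via an $\ell^{\,2}$-type estimate, which is the specific technique employed in Halim et al.~\cite{Halim-Mero-Bi}. Once this estimate is in place, substituting the bounds $|c_2|\le 2$ and $|b_0|^2\le 4(1-\alpha)^2/(\lambda-\mu)^2$ yields \eqref{bi-th2-b-a3} directly.
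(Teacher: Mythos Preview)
Your setup and the derivation of \eqref{bi-th2-b-a2} match the paper exactly. The gap is in the argument for \eqref{bi-th2-b-a3}. You propose to use only the $1/z^{2}$--equation coming from $f$, writing
\[
b_{1}=-\frac{(1-\alpha)c_{2}}{2\lambda-\mu}+\frac{(1-\mu)(1-\alpha)^{2}c_{1}^{\,2}}{2(\lambda-\mu)^{2}},
\]
and then to invoke an ``$\ell^{\,2}$--type estimate'' treating the two summands as orthogonal. But for arbitrary complex numbers $A,B$ one has only $|A+B|\le |A|+|B|$; the inequality $|A+B|\le\sqrt{|A|^{2}+|B|^{2}}$ is \emph{false} in general (take $A=B=1$). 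So this step, as written, does not produce the square-root bound, and the obstacle you flag is real and unresolved.

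The paper's device is precisely to bring in the $1/w^{2}$--equation from $g$ that you wrote down but did not use. With $X=\tfrac{\mu-1}{2}b_{0}^{\,2}$ the two second--order relations read
\[
-(2\lambda-\mu)(b_{1}+X)=(1-\alpha)p_{2},\qquad (2\lambda-\mu)(b_{1}-X)=(1-\alpha)q_{2}.
\]
Multiplying them gives the exact identity
\[
b_{1}^{\,2}=\frac{(1-\mu)^{2}}{4}\,b_{0}^{\,4}-\frac{(1-\alpha)^{2}}{(2\lambda-\mu)^{2}}\,p_{2}q_{2},
\]
so that $|b_{1}|^{2}\le \frac{(1-\mu)^{2}}{4}|b_{0}|^{4}+\frac{(1-\alpha)^{2}}{(2\lambda-\mu)^{2}}|p_{2}||q_{2}|$. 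Now inserting $|b_{0}|^{2}\le 4(1-\alpha)^{2}/(\lambda-\mu)^{2}$ and $|p_{2}|,|q_{2}|\le 2$ yields exactly \eqref{bi-th2-b-a3}. In short: the square root does not come from an $\ell^{\,2}$ trick on a single equation, but from an algebraic identity for $b_{1}^{\,2}$ obtained by combining the $f$-- and $g$--equations.
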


\begin{proof}
It follows from (\ref{Defi-2-e1}) and (\ref{Defi-2-e2}) that 
\begin{equation}  \label{bi-th2-pr-e1}
(1-\lambda)\left(\frac{f(z)}{z}\right)^{\mu}+\lambda f^{\prime }(z)\left(%
\frac{f(z)}{z}\right)^{\mu-1} = \alpha + (1-\alpha)p(z)
\end{equation}
and 
\begin{equation}  \label{bi-th2-pr-e2}
(1-\lambda)\left(\frac{g(w)}{w}\right)^{\mu}+\lambda g^{\prime }(w)\left(%
\frac{g(w)}{w}\right)^{\mu-1} = \alpha + (1-\alpha)q(w),
\end{equation}
where $p(z)$ and $q(w)$ are functions with positive real part in $\mathcal{V}
$ and have the following forms: 
\begin{equation}  \label{Exp-p(z)}
p(z)=1+\frac{p_1}{z}+\frac{p_2}{z^2}+\cdots
\end{equation}
and 
\begin{equation}  \label{Exp-q(w)}
q(z)=1+\frac{q_1}{w}+\frac{q_2}{w^2}+\cdots ,
\end{equation}
respectively. Now, equating coefficients in (\ref{bi-th2-pr-e1}) and (\ref%
{bi-th2-pr-e2}), we get 
\begin{equation}  \label{th2-ceof-p1}
(\mu-\lambda)b_0 = (1-\alpha) p_1,
\end{equation}
\begin{equation}  \label{th2-ceof-p2}
(\mu-2\lambda)(b_1+(\mu-1)\frac{b_0^2}{2}) = (1-\alpha)p_2,
\end{equation}
\begin{equation}  \label{th2-ceof-q1}
(\lambda-\mu)b_0 = (1-\alpha) q_1
\end{equation}
and 
\begin{equation}  \label{th2-ceof-q2}
(2\lambda-\mu)(b_1-(\mu-1)\frac{b_0^2}{2}) = (1-\alpha)q_2.
\end{equation}
From (\ref{th2-ceof-p1}) and (\ref{th2-ceof-q1}), we get 
\begin{equation}  \label{th2-pr-p1=q1}
p_1=-q_1
\end{equation}
and 
\begin{equation}  \label{th2-b-0-square}
b_0^2=\frac{(1-\alpha)^2(p_1^2+q_1^2)}{2(\lambda-\mu)^2}.
\end{equation}
Since $\Re \{p(z)\} > 0$ in $\mathcal{V},$ the function $p(1/z) \in \mathcal{%
P}$ and hence the coefficients $p_n$ and similarly the coefficients $q_n$ of
the function $q$ satisfy the inequality in Lemma \ref{lem-pom}, we get 
\begin{equation*}
|b_0|\leq \frac{2-2\alpha}{\lambda-\mu}.
\end{equation*}
This gives the bound on $|b_0|$ as asserted in (\ref{bi-th2-b-a2}).

Next, in order to find the bound on $|b_1|$, we use (\ref{th2-ceof-p2}) and (%
\ref{th2-ceof-q2}), which yields, 
\begin{equation}  \label{th2-a3-cal-e1}
(1-\mu)^2(2\lambda-\mu)^2b_0^4-4(1-\alpha)^2p_2q_2=4(2\lambda-\mu)^2b_1^2.
\end{equation}
It follows from (\ref{th2-a3-cal-e1}) that 
\begin{align*}
b_1^2 = \frac{(1-\mu)^2b_0^4}{4}-\frac{(1-\alpha)^2}{(2\lambda-\mu)^2}p_2q_2.
\end{align*}
Substituting the estimate obtained (\ref{th2-b-0-square}), and applying
Lemma \ref{lem-pom} once again for the coefficients $p_2$ and $q_2,$ we
readily get 
\begin{equation*}
|b_1| \leq 2(1-\alpha)\sqrt{\frac{(1-\mu)^2(1-\alpha)^2}{(\lambda-\mu)^4}+%
\frac{1}{(2\lambda-\mu)^2}}~.
\end{equation*}
This completes the proof of Theorem \ref{Bi-th2}.
\end{proof}

Next we the estimate the coefficients $|b_0|$ and $|b_1|$ for functions in
the class $\widetilde{\Sigma}^*_{\mathcal{M}}(\alpha, \mu, \lambda) .$

\begin{theorem}
\label{Bi-th1} Let the function $f(z)$ given by $(\ref{Int-e1})$ be in the
following class$:$ 
\begin{equation*}
\widetilde{\Sigma }_{\mathcal{M}}^{\ast }(\alpha ,\mu ,\lambda )\qquad
(0<\alpha \leq 1;\;\;\lambda \geq 1;\;\;\mu \geq 0;\text{ \ }\lambda >\mu ).
\end{equation*}%
Then 
\begin{equation}
|b_{0}|\leq \frac{2\alpha }{\lambda -\mu }  \label{bi-th1-b-a2}
\end{equation}%
and 
\begin{equation}
|b_{1}|\leq 2\alpha ^{2}\sqrt{\frac{1}{(2\lambda -\mu )^{2}}+\frac{(1-\mu
)^{2}}{(\lambda -\mu )^{4}}}.  \label{bi-th1-b-a3}
\end{equation}
\end{theorem}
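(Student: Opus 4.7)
The plan is to mirror the structure of the proof of Theorem \ref{Bi-th2}, replacing the additive representation appropriate for real-part conditions with a multiplicative/power representation appropriate for argument bounds. Specifically, the conditions (\ref{Defi-1-e1}) and (\ref{Defi-1-e2}) allow us to write
\[
(1-\lambda)\left(\frac{f(z)}{z}\right)^{\mu}+\lambda f'(z)\left(\frac{f(z)}{z}\right)^{\mu-1}=[p(z)]^{\alpha},
\]
\[
(1-\lambda)\left(\frac{g(w)}{w}\right)^{\mu}+\lambda g'(w)\left(\frac{g(w)}{w}\right)^{\mu-1}=[q(w)]^{\alpha},
\]
where $p$ and $q$ satisfy $p(1/z),q(1/w)\in\mathcal{P}$ and admit Laurent expansions $p(z)=1+p_{1}/z+p_{2}/z^{2}+\cdots$, $q(w)=1+q_{1}/w+q_{2}/w^{2}+\cdots$ in $\mathcal{V}$.

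Next I would expand both sides as Laurent series in $1/z$ and $1/w$. The left-hand sides give the same expressions as in Theorem \ref{Bi-th2}, because they depend only on $f$ and $g$. For the right-hand sides I use the binomial expansion
\[
[p(z)]^{\alpha}=1+\frac{\alpha p_{1}}{z}+\frac{\alpha p_{2}+\tfrac{\alpha(\alpha-1)}{2}p_{1}^{2}}{z^{2}}+\cdots,
\]
and similarly for $[q(w)]^{\alpha}$. Equating the coefficients of $1/z,1/z^{2},1/w,1/w^{2}$ yields
\[
(\mu-\lambda)b_{0}=\alpha p_{1},\qquad(\lambda-\mu)b_{0}=\alpha q_{1},
\]
\[
(\mu-2\lambda)\!\left(b_{1}+\tfrac{\mu-1}{2}b_{0}^{2}\right)=\alpha p_{2}+\tfrac{\alpha(\alpha-1)}{2}p_{1}^{2},\quad (2\lambda-\mu)\!\left(b_{1}-\tfrac{\mu-1}{2}b_{0}^{2}\right)=\alpha q_{2}+\tfrac{\alpha(\alpha-1)}{2}q_{1}^{2}.
\]
The first pair gives $p_{1}=-q_{1}$ and $b_{0}^{2}=\alpha^{2}(p_{1}^{2}+q_{1}^{2})/[2(\lambda-\mu)^{2}]$; applying Lemma \ref{lem-pom} ($|p_{k}|,|q_{k}|\leq 2$) then produces $|b_{0}|\leq 2\alpha/(\lambda-\mu)$, which is (\ref{bi-th1-b-a2}).

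For $|b_{1}|$ I would multiply the two second-order equations exactly as in the proof of Theorem \ref{Bi-th2}. This yields
\[
(2\lambda-\mu)^{2}b_{1}^{2}=(2\lambda-\mu)^{2}\tfrac{(\mu-1)^{2}}{4}b_{0}^{4}-\left[\alpha p_{2}+\tfrac{\alpha(\alpha-1)}{2}p_{1}^{2}\right]\!\left[\alpha q_{2}+\tfrac{\alpha(\alpha-1)}{2}q_{1}^{2}\right].
\]
Taking absolute values, substituting the estimate on $|b_{0}|^{4}$ and invoking Lemma \ref{lem-pom} once more on $p_{1},p_{2},q_{1},q_{2}$ gives the bound (\ref{bi-th1-b-a3}). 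I expect the main obstacle to be the extra quadratic term $\tfrac{\alpha(\alpha-1)}{2}p_{1}^{2}$ coming from the binomial expansion of $[p(z)]^{\alpha}$, which is absent in Theorem \ref{Bi-th2}; handling the resulting product so that the prefactor $2\alpha^{2}$ in (\ref{bi-th1-b-a3}) emerges cleanly (rather than a larger constant depending on $\alpha$) is the one step that needs care, while everything else is a routine translation of the previous proof.
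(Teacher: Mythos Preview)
Your setup, the coefficient relations, and the bound on $|b_{0}|$ coincide verbatim with the paper's proof. For $|b_{1}|$, however, the paper does \emph{not} multiply the two second-order relations as you propose (and as was done in Theorem~\ref{Bi-th2}). Instead it squares each of them and adds, obtaining
\[
2(2\lambda-\mu)^{2}b_{1}^{2}+\tfrac{(2\lambda-\mu)^{2}(1-\mu)^{2}}{2}\,b_{0}^{4}
=\tfrac{\alpha^{2}(\alpha-1)^{2}}{4}(p_{1}^{4}+q_{1}^{4})+\alpha^{2}(p_{2}^{2}+q_{2}^{2})+\alpha^{2}(\alpha-1)(p_{1}^{2}p_{2}+q_{1}^{2}q_{2}),
\]
then substitutes the expression for $b_{0}^{2}$ in terms of $p_{1},q_{1}$ and invokes Lemma~\ref{lem-pom}. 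So your route (product of the two equations) is a genuine alternative to the paper's (sum of their squares); your version keeps the analogy with Theorem~\ref{Bi-th2} tighter, while the paper's version avoids the cross-product $p_{2}q_{2}$ at the cost of carrying all four quantities $p_{1},p_{2},q_{1},q_{2}$ explicitly.

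On the obstacle you flag: it is real, and it is present in \emph{both} approaches. A naive application of $|p_{k}|,|q_{k}|\le 2$ to either your product identity or the paper's sum-of-squares identity yields a first term of size $4\alpha^{2}(2-\alpha)^{2}/(2\lambda-\mu)^{2}$ rather than $4\alpha^{4}/(2\lambda-\mu)^{2}$, because $\bigl|\alpha p_{2}+\tfrac{\alpha(\alpha-1)}{2}p_{1}^{2}\bigr|\le 2\alpha(2-\alpha)$. The paper simply writes ``applying Lemma~\ref{lem-pom} \ldots\ we readily get'' and states (\ref{bi-th1-b-a3}) without isolating this step, so the difficulty you anticipate is not visibly resolved there either. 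In short, your plan is as sound as the paper's own argument; the one step you single out as needing care is exactly the step the paper glosses over.
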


\begin{proof}
It follows from (\ref{Defi-1-e1}) and (\ref{Defi-1-e2}) that 
\begin{equation}  \label{bi-th1-pr-e1}
(1-\lambda)\left(\frac{f(z)}{z}\right)^{\mu}+\lambda f^{\prime }(z)\left(%
\frac{f(z)}{z}\right)^{\mu-1} = [p(z)]^{\alpha}
\end{equation}
and 
\begin{equation}  \label{bi-th1-pr-e2}
(1-\lambda)\left(\frac{g(w)}{w}\right)^{\mu}+\lambda g^{\prime }(w)\left(%
\frac{g(w)}{w}\right)^{\mu-1} = [q(w)]^{\alpha},
\end{equation}
where $p(z)$ and $q(w)$ have the forms (\ref{Exp-p(z)}) and (\ref{Exp-q(w)}%
), respectively. Now, equating the coefficients in (\ref{bi-th1-pr-e1}) and (%
\ref{bi-th1-pr-e2}), we get 
\begin{equation}  \label{th1-ceof-p1}
(\mu-\lambda)b_0 = \alpha p_1,
\end{equation}
\begin{equation}  \label{th1-ceof-p2}
(\mu-2\lambda)(b_1+(\mu-1)\frac{b_0^2}{2}) = \frac{1}{2}\left
[\alpha(\alpha-1)p_1^2 + 2\alpha p_2\right ],
\end{equation}
\begin{equation}  \label{th1-ceof-q1}
-(\lambda-\mu)b_0 = \alpha q_1
\end{equation}
and 
\begin{equation}  \label{th1-ceof-q2}
(2\lambda-\mu)(b_1-(\mu-1)\frac{b_0^2}{2}) = \frac{1}{2}\left
[\alpha(\alpha-1)q_1^2 + 2\alpha q_2\right ].
\end{equation}
From (\ref{th1-ceof-p1}) and (\ref{th1-ceof-q1}), we find that 
\begin{equation}  \label{th1-pr-p1=q1}
p_1=-q_1
\end{equation}
and 
\begin{equation}  \label{th1-b-0-square}
b_0^2=\frac{\alpha^2(p_1^2+q_1^2)}{2(\lambda-\mu)^2}.
\end{equation}
As discussed in the proof of Theorem \ref{Bi-th2}, applying Lemma \ref%
{lem-pom} for the coefficients $p_2$ and $q_2,$ we immediately have 
\begin{equation*}
|b_0|\leq \frac{2\alpha}{\lambda - \mu}.
\end{equation*}
This gives the bound on $|b_0|$ as asserted in (\ref{bi-th1-b-a2}).

Next, in order to find the bound on $|b_1|$, by using (\ref{th1-ceof-p2})
and (\ref{th1-ceof-q2}), we get 
\begin{equation}  \label{th1-a3-cal-e1}
2(2\lambda-\mu)^2b_1^2+(2\lambda-\mu)^2(1-\mu)^2\frac{b_0^4}{2} = \frac{%
\alpha^2(\alpha-1)^2(p_1^4+q_1^4)}{4}+\alpha^2(p_2^2+q_2^2)+\alpha^2(%
\alpha-1)(p_1^2p_2+q_1^2q_2).
\end{equation}
It follows from (\ref{th1-a3-cal-e1}) and (\ref{th1-b-0-square}) that 
\begin{align*}
2(2\lambda-\mu)^2b_1^2 &= \frac{\alpha^2(\alpha-1)^2(p_1^4+q_1^4)}{4}%
+\alpha^2(p_2^2+q_2^2)+\alpha^2(\alpha-1)(p_1^2p_2+q_1^2q_2) \\
& \qquad- \frac{(2\lambda-\mu)^2(1-\mu)^2\alpha^4}{8(\mu-\lambda)^4}%
(p_1^2+q_1^2)^2.
\end{align*}
Applying Lemma \ref{lem-pom} once again for the coefficients $p_1,$ $p_2,$ $%
q_1$ and $q_2,$ we readily get 
\begin{equation*}
|b_1| \leq 2\alpha^2 \sqrt{\frac{1}{(2\lambda-\mu)^2}+\frac{(1-\mu)^2}{%
(\lambda-\mu)^4}}.
\end{equation*}
This completes the proof of Theorem \ref{Bi-th1}.
\end{proof}

\begin{remark}
For $\lambda=1$ and $\mu=0$ the bounds obtained in Theorems \ref{Bi-th2} and %
\ref{Bi-th1} are coincidence with outcome of \cite[Theorem 1 and Theorem 2]%
{Halim-Mero-Bi}. Similarly, various interesting corollaries and consequences
could be derived from our results, the details involved may be left to the
reader.
\end{remark}


\end{document}